\newtheorem{thm}{Theorem}[section]
\newtheorem{co}{Corollary}[section]
\newtheorem{re}{Remark}[section]
\def\binom#1#2{{#1}\choose{#2}}
\newcommand{\R}{{\rm I}\kern-0.18em{\rm R}}
\newcommand{\1}{{\rm 1}\kern-0.25em{\rm I}}
\newcommand{\E}{{\rm I}\kern-0.18em{\rm E}}
\newcommand{\p}{{\rm I}\kern-0.18em{\rm P}}
\title{Distributional Tail Estimation Through Its Characteristic Function}
\author{Lev B. Klebanov\footnote{Department of Probability and Mathematical Statistics, Charles University, Prague, Czech Republic. e-mail: levbkl@gmail.com} and Andrea Karlov\'{a}\footnote{e-mail:   andrea.karlova@gmail.com}}
\date{}
\begin{document}
\maketitle	

\begin{abstract}
There is given a method for estimation of a probability distribution tail in terms of characteristic function.

{\bf Key words:} characteristic function; tail of a distribution.
\end{abstract}

\section{Introduction}
There are large classes of probability distributions defined by their characteristic functions. It is enough to mention: 
\begin{itemize}
\item stable distributions (see \cite{Zolot}); 
\item $\nu$-stable distributions (see \cite{KKozR}); 
\item discrete stable distributions (see \cite{KSl}); 
\item tempered stable distributions (see \cite{Tempered}). 
\end{itemize}
 Below we give a method obtaining estimators for the tails of arbitrary distributions basing on their characteristic functions. 

\section{Main results}
\setcounter{equation}{0}

\begin{thm}\label{th1} Let 
\begin{equation}\label{eq1}
P(\theta)= \sum_{j=0}^k a_k \cos(j\theta) + \sum_{j=1}^k b_k \sin(j\theta)
\end{equation}
be a non-negative trigonometric polynomial of degree $k$. Suppose that $F(x)$ is a cumulative distribution function, and $f(t)$ is its characteristic function. Then for any $s>0$
\begin{align}\label{eq2}
\begin{aligned}
1-F(2\pi /s)+ F(-2\pi /s) \le \qquad \qquad \\ \le \frac{2}{s a_0}\Bigl(\sum_{j=0}^k a_j \int_{0}^{s}\tt{Re}\, f(ju)du + \sum_{j=1}^k b_j \int_{0}^{s}\tt{Im}\, f(ju)du \Bigr).
\end{aligned}
\end{align}
\end{thm}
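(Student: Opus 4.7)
The plan is to use Fubini's theorem to recast the right-hand side of \eqref{eq2} as an expectation against $F$, and then to exploit both the non-negativity and the $2\pi$-periodicity of $P$ to bound the resulting integrand from below on $\{|x|\geq 2\pi/s\}$. Set
$$Q_s(x) := \frac{1}{s}\int_0^s P(xu)\,du,$$
which is non-negative because $P \geq 0$.

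First I would exchange the order of integration. Since $|\cos|,|\sin|\leq 1$ and $F$ is a probability measure, Fubini applies term by term and, using $\mathrm{Re}\,f(ju)=\int\cos(jxu)\,dF(x)$ and $\mathrm{Im}\,f(ju)=\int\sin(jxu)\,dF(x)$, gives
$$\begin{aligned}
&\sum_{j=0}^{k}a_j\int_0^s\mathrm{Re}\,f(ju)\,du+\sum_{j=1}^{k}b_j\int_0^s\mathrm{Im}\,f(ju)\,du \\
&\qquad = \int_{-\infty}^{\infty}\int_0^s P(xu)\,du\,dF(x) = s\int_{-\infty}^{\infty}Q_s(x)\,dF(x).
\end{aligned}$$
Hence the right-hand side of \eqref{eq2} equals $\frac{2}{a_0}\int Q_s\,dF$, and the claim reduces to showing that the probability $P(|X|\geq 2\pi/s)$ is bounded by $\frac{2}{a_0}\int Q_s\,dF$.

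The crucial step is the pointwise lower bound $Q_s(x)\geq a_0/2$ whenever $|x|\geq 2\pi/s$. Changing variable $v = xu$ (separately for $x>0$ and $x<0$, noting that $P(-v)$ is again a non-negative $2\pi$-periodic trigonometric polynomial with the same constant term $a_0$) recasts $Q_s$ as the running average
$$Q_s(x) = \frac{1}{|sx|}\int_0^{|sx|}P(\operatorname{sgn}(x)\,v)\,dv.$$
Since $P\geq 0$ has period $2\pi$ with mean $a_0$ over each full period, writing $|sx| = 2\pi m + r$ with integer $m \geq 0$ and $r\in[0,2\pi)$ gives $\int_0^{|sx|}P(\pm v)\,dv \geq 2\pi m\, a_0$. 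For $|x| \geq 2\pi/s$ one has $m\geq 1$ and $|sx|<2\pi(m+1)$, hence $Q_s(x)\geq \frac{m}{m+1}a_0 \geq \frac{a_0}{2}$. Combining with $Q_s\geq 0$ everywhere and the trivial inclusion $\{X>2\pi/s\}\cup\{X\leq -2\pi/s\}\subset\{|X|\geq 2\pi/s\}$ closes the argument. I expect the main obstacle to be carefully executing the change of variables and the periodicity-plus-non-negativity step for $x<0$ without sign errors; the Fubini exchange itself is routine.
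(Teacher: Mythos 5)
Your proof is correct and follows essentially the same route as the paper: exchange the order of integration via Fubini, then use that the running average of the non-negative $2\pi$-periodic polynomial over $[0,|sx|]$ is at least $\frac{m}{m+1}a_0\ge a_0/2$ once $|sx|\ge 2\pi$. Your pointwise formulation (working directly with $dF$ on the whole line instead of reducing to $F_1(x)=F(x)-F(-x)$ and summing over the intervals $[2\pi m/s,\,2\pi(m+1)/s]$) is in fact slightly cleaner, since the paper's reduction to $F_1$ tacitly uses evenness of $P$, which your separate treatment of $x<0$ via $P(-v)$ avoids.
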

\begin{proof}
Denote $F_1(x)=F(x)-F(-x)$. Let us consider
\begin{align}\label{eq3}
\begin{aligned}
\int_{0}^{s}\Bigl(\int_{-\infty}^{\infty} P(xu)dF(x) \Bigr)du=\int_{0}^{s}\Bigl(\int_{0}^{\infty} P(xu)dF_1(x) \Bigr)du \\
= \int_0^{\infty}\Bigl(\int_0^sP(xu)du \Bigr)dF_1(x).\qquad \qquad
\end{aligned}
\end{align}
We have
\begin{align}\label{eq4}
\begin{aligned}
\int_{0}^{s}\Bigl(\int_{-\infty}^{\infty} P(xu)dF(x) \Bigr)du=\qquad \qquad \\ =\sum_{j=0}^k a_j \int_{0	}^{s}\tt{Re}\, f(ju)du + \sum_{j=1}^k b_j \int_{0}^{s}\tt{Im}\, f(ju)du,
\end{aligned}
\end{align}
which is identical with the expression in brackets at right hand side of (\ref{eq2}).

On the other hand, 
\begin{align}\label{eq5}
 \int_0^{\infty}\Bigl(\int_0^sP(xu)du \Bigr)dF_1(x) =
 \int_0^{\infty}\Bigl( \frac{1}{x}\int_{0}^{sx}P(v)dv \Bigr)dF_1(x)
\end{align}
It is clear that the polynomial $P$ has $2\pi$ as its period (not necessary, minimal one). Let us define numbers $A_m =2\pi m/s$, $m=1,2, \ldots$. It is also clear that 
\[ \int_0^{2\pi}P(\theta)d \theta = 2 \pi a_0. \]
From relations (\ref{eq3}) and (\ref{eq5}) it follows that

\begin{align*}
\begin{aligned}
\int_{0}^{s}\Bigl(\int_{-\infty}^{\infty} P(xu)dF_1(x) \Bigr)du \ge \sum_{m=1}^{\infty}\int_{A_m}^{A_{m+1}}\frac{1}{x}\Bigl(\int_{0}^{s}P(v)dv\Bigr)dF_1(x) \ge\\
\ge \sum_{m=1}^{\infty}\frac{1}{A_{m+1}}\int_{0}^{A_m s}P(v) dv \Bigl( F_1(A_{m+1}) - F_1(A_m)\Bigr)\ge \\
\end{aligned}
\end{align*}
\begin{align}\label{eq6}
\begin{aligned}
\ge \sum_{m=1}^{\infty}\frac{1}{A_{m+1}}\Bigl( \sum_{j=0}^{m-1}\int_{A_j s}^{A_{j+1}s}P(v) dv\Bigr)\Bigl(F_1(A_{m+1}-F_1(A_m))\Bigr) \ge \\
\ge \sum_{m=1}^{\infty}\frac{sm}{2\pi (m+1)}2\pi a_0 \Bigl(F_1(A_{m+1}-F_1(A_m))\Bigr) \ge \\
\ge \frac{s a_0}{2}\Bigl(1-F_1(A_1)\Bigr)= \frac{s a_0}{2}\Bigl(1-F(2 \pi /s)+F(-2 \pi /s)\Bigr).
\end{aligned}
\end{align}
The result follows now from (\ref{eq4}) and (\ref{eq6}).
\end{proof}
Choosing different trigonometric polynomial $P(\theta)$ we obtain variety of tail estimators. Namely, for the case of
\[ P(\theta)= \sin^{2k}\theta =\frac{1}{2^{2k}} {\binom{2k}{k}}+\frac{1}{2^{2k-1}}\sum_{j=0}^{k-1}(-1)^{k-j}{\binom{2k}{j}}\cos(2(k-j)\theta)\]
we obtain the following 
\begin{co}\label{co1} (see \cite{Sap})
For a distribution function $F$ holds the following estimator
\begin{align}\label{eq7}
F(-2\pi/s)+1-F(2\pi/s) \le \frac{(-1)^k 2^((2k)!!)}{4^k (2k-1)!!}\int_{0}^{s}\Delta_u^{(2k)}({\tt Re}\,f,0)du,
\end{align}  
where
\[ \Delta_u^{(2k)}(g,t) = \sum_{j=0}^{2k}(-1)^j{\binom{2k}{j}}g(t-(k-j)u)\]
and $s>0$.
\end{co}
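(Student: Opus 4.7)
The strategy is to specialise Theorem~\ref{th1} to the choice $P(\theta)=\sin^{2k}\theta$ and then recast the resulting estimate in finite--difference form.

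First, I would note that $P$ is a non-negative trigonometric polynomial of degree $2k$; its Fourier decomposition, stated just before the corollary, follows from binomial expansion of $(e^{i\theta}-e^{-i\theta})^{2k}/(2i)^{2k}$, pairing complex conjugate exponentials and isolating the central $j=k$ term. From it one reads off
\[
a_0=\frac{1}{2^{2k}}\binom{2k}{k},\qquad a_{2(k-j)}=\frac{(-1)^{k-j}}{2^{2k-1}}\binom{2k}{j}\quad(j=0,\dots,k-1),
\]
with all remaining $a_i$ and all $b_i$ equal to zero, and then applies Theorem~\ref{th1} (valid verbatim with $k$ replaced by $2k$).

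Substituting these coefficients into \eqref{eq2}, the $a_0$ term contributes $\frac{2}{sa_0}\cdot a_0 s=2$, while the remaining terms collapse, after the change of variable $v=2u$, to a multiple of $\sum_{j=0}^{k-1}(-1)^{k-j}\binom{2k}{j}\int_{0}^{2s}\mathrm{Re}\,f((k-j)v)\,dv$. The identification with the finite difference is the heart of the argument: using that $\mathrm{Re}\,f$ is even (because $f(-t)=\overline{f(t)}$) and the binomial symmetry $\binom{2k}{j}=\binom{2k}{2k-j}$, pairing the indices $j$ and $2k-j$ in the definition of $\Delta_v^{(2k)}(\mathrm{Re}\,f,0)$ yields
\[
\Delta_v^{(2k)}(\mathrm{Re}\,f,0)=(-1)^{k}\binom{2k}{k}+2\sum_{j=0}^{k-1}(-1)^{j}\binom{2k}{j}\mathrm{Re}\,f((k-j)v).
\]
Integrating over $[0,2s]$ and rearranging, the target sum becomes $\tfrac12\int_{0}^{2s}\Delta_v^{(2k)}(\mathrm{Re}\,f,0)\,dv$ minus the constant contribution $s(-1)^k\binom{2k}{k}$ from the central $j=k$ term; this constant, once multiplied by the surrounding prefactor $\tfrac{2(-1)^k}{s\binom{2k}{k}}$, contributes precisely $-2$ and cancels the $+2$ coming from the $a_0$ piece.

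Finally, I would rewrite the remaining factor $\tfrac{(-1)^k}{s\binom{2k}{k}}$ using the identity $\binom{2k}{k}=4^{k}(2k-1)!!/(2k)!!$ (a consequence of $(2k)!=(2k)!!\,(2k-1)!!$ and $k!=(2k)!!/2^{k}$), obtaining the constant displayed in \eqref{eq7}. The main obstacle is purely bookkeeping: carefully tracking the two independent factors of $2$ (one from the argument $2(k-j)u$ in the Fourier expansion of $\sin^{2k}\theta$, one from pairing $j$ with $2k-j$ in the finite difference) and verifying that the spurious constant contributions from the $a_0$ term and from the $j=k$ term of $\Delta_v^{(2k)}$ cancel exactly.
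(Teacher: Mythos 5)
Your specialization of Theorem~\ref{th1} is exactly the route the paper intends (the paper gives no argument beyond ``choosing $P(\theta)=\sin^{2k}\theta$''), and the bulk of your bookkeeping is correct: the coefficients $a_0=\binom{2k}{k}/4^k$, $a_{2(k-j)}=(-1)^{k-j}\binom{2k}{j}/2^{2k-1}$, the pairing identity for $\Delta_v^{(2k)}({\tt Re}\,f,0)$ using evenness of ${\tt Re}\,f$ and $\binom{2k}{j}=\binom{2k}{2k-j}$, and the exact cancellation of the $+2$ from the $a_0$ term against the $-2$ from the central $j=k$ term are all right (for $k=1$ your chain reproduces the classical truncation inequality with $\frac1s\int_0^{2s}(1-{\tt Re}\,f(v))\,dv$ on the right).

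The gap is the final identification with \eqref{eq7}. What your computation proves is
\[
1-F(2\pi/s)+F(-2\pi/s)\ \le\ \frac{(-1)^k}{s\binom{2k}{k}}\int_0^{2s}\Delta_v^{(2k)}({\tt Re}\,f,0)\,dv\ =\ \frac{2(-1)^k}{s\binom{2k}{k}}\int_0^{s}\Delta_{2u}^{(2k)}({\tt Re}\,f,0)\,du,
\]
i.e.\ the finite difference enters with step $2u$ (equivalently, the integral runs to $2s$), while \eqref{eq7} has step $u$ on $[0,s]$; your inequality is \eqref{eq7} with the tail evaluated at $4\pi/s$ instead of $2\pi/s$, hence strictly weaker than the stated corollary. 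To land on \eqref{eq7} verbatim you should apply Theorem~\ref{th1} to the degree-$k$ polynomial $P(\theta)=\sin^{2k}(\theta/2)$, whose expansion has the same coefficients but attached to $\cos((k-j)\theta)$; then the theorem produces ${\tt Re}\,f((k-j)u)$ directly, and the same cancellation yields the constant $\frac{2(-1)^k}{s\binom{2k}{k}}=\frac{2(-1)^k(2k)!!}{s\,4^k(2k-1)!!}$ with $\int_0^s\Delta_u^{(2k)}({\tt Re}\,f,0)\,du$. Note also that \eqref{eq7} as printed is garbled: besides the stray exponent in ``$2^{((2k)!!)}$'', the factor $1/s$ is missing, and without it the right-hand side is $O(s^{2k+1})$ as $s\to0$, which already fails for the Cauchy law at $k=1$; so your formula has the correct shape, but your closing claim that it ``obtains the constant displayed in \eqref{eq7}'' does not hold for the polynomial you used --- the half-angle choice of $P$ is what produces that constant and that step.
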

\begin{re}\label{re1}
Let us note that if characteristic function $f(t)$ has derivative of order $2k$ at the origin, then
\begin{equation}\label{eq8}
|\Delta_u^{(2k)}({\tt Re}\,f,0)| \le u^{2k} |f^{(2k)}(0)|.
\end{equation}
\end{re}
It is easy to see that Corollary \ref{co1} gives 
correct order of the tails for symmetric stable and geometric stable distributions as $s \to 0$.

Theorem \ref{th1} and its corollary work nicely for the case of distributions with tails of power order. However, many probability laws have exponential tails. This is the case for, say, tempered stable distributions. Therefore, it is interesting to obtain exponential boundaries for the tails of analytical characteristic functions.

\begin{thm}\label{th2}
Suppose that $F(x)$ is probability law, whose characteristic function $f(t)$ is analytic for $|t|<R$ ($0<R \le \infty$). Then for any $A>0$ and arbitrary $s\in (0,R)$ the following inequality holds:
\begin{align}\label{eq9}
1-F(A)+F(-A) \le \frac{\int_{0}^{s}\Bigl(\frac{f(i u)+f(-iu)}{2} -1 \Bigr)du}{s\,\bigl(\sinh(As)/(As)-1\bigr)}.
\end{align}
\end{thm}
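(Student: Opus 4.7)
The plan is to mimic the argument of Theorem~\ref{th1}, replacing the non-negative trigonometric polynomial $P(xu)$ by the non-negative even function $\cosh(xu)-1$. The analyticity of $f$ in the disk $|t|<R$ guarantees that the two-sided Laplace transform $\int_{-\infty}^{\infty}e^{-xu}\,dF(x)$ exists for real $|u|<R$ and coincides with $f(iu)$; consequently, for any $s\in(0,R)$,
\[
\frac{f(iu)+f(-iu)}{2}-1=\int_{-\infty}^{\infty}\bigl(\cosh(xu)-1\bigr)\,dF(x),\qquad 0\le u\le s,
\]
and the corresponding iterated integral over $[0,s]\times\mathbb{R}$ is absolutely convergent.

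The main step is to apply Tonelli's theorem (permissible because the integrand is non-negative) and evaluate the inner integral in closed form:
\[
\int_0^s\bigl(\cosh(xu)-1\bigr)\,du=\frac{\sinh(xs)}{x}-s=s\!\left(\frac{\sinh(xs)}{xs}-1\right).
\]
The resulting weight is non-negative, even in $x$, and strictly increasing in $|x|$, since $y\mapsto\sinh(y)/y$ is even and strictly increasing on $[0,\infty)$.

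These properties reduce the tail estimate to a one-line monotonicity argument: for any $A>0$,
\[
\int_{-\infty}^{\infty}\!s\!\left(\frac{\sinh(xs)}{xs}-1\right)dF(x)\ge s\!\left(\frac{\sinh(As)}{As}-1\right)\bigl(1-F(A)+F(-A)\bigr),
\]
obtained by discarding the contribution from $\{-A<x\le A\}$ (where the integrand is non-negative anyway) and bounding the weight below by its value at $|x|=A$ on $\{x\le -A\}\cup\{x>A\}$. Combining this with the previous display and dividing by the strictly positive factor $s\bigl(\sinh(As)/(As)-1\bigr)$ yields (\ref{eq9}).

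The only delicate point is the justification of the identity $f(iu)=\int e^{-xu}\,dF(x)$ for $|u|<R$ and of the Tonelli interchange. Both rest on the classical equivalence between analyticity of a characteristic function on the disk of radius $R$ and finiteness of the two-sided Laplace transform on the interval $(-R,R)$; once this is in hand, the non-negativity of $\cosh(xu)-1$ makes the swap of integrations automatic, and the rest of the proof is a direct monotone comparison as above.
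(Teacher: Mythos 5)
Your proposal is correct and follows essentially the same route as the paper: identify $\tfrac{f(iu)+f(-iu)}{2}-1$ with the integral of $\cosh(xu)-1$ against the distribution (justified by Raikov-type analyticity/exponential-moment equivalence), interchange the integrations, evaluate the inner integral to get the weight $s\bigl(\sinh(xs)/(xs)-1\bigr)$, and use its monotonicity in $|x|$ to bound the tail mass $1-F(A)+F(-A)$. The only cosmetic difference is that the paper folds the distribution onto $[0,\infty)$ via $F_1(x)=F(x)-F(-x)$ while you integrate the even function directly over $\mathbb{R}$, which is the same computation.
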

\begin{proof}
From Raikov Theorem (see, for example, \cite{Lin}) it follows that characteristic function $f(t)$ is analytic in the strip $|\tt{ Im}t |<R$, and the distribution $F(x)$ has exponential moments of all orders less than $R$. 
Consider the following expression:
\begin{align}\label{eq10}
\int_0^s \Bigl(\int_{0}^{\infty}(\cosh(xu)-1)dF_1(x)\Bigr)du = \int_0^s \Bigl(\frac{f(iu)+f(-iu)}{2}-1\Bigr)du,
\end{align}
where, as before, $F_1(x)=F(x)-F(-x)$. Left hand side of the equation (\ref{eq10}) may be transformed in the following way:
\begin{align}\label{eq11}
\begin{aligned}
\int_0^s \Bigl(\int_{0}^{\infty}(\cosh(xu)-1)dF_1(x)\Bigr)du =\int_{0}^{\infty}\Bigl(\int_0^s(\cosh(xu)-1)du \Bigr)dF_1(x)=\\ = s \int_{0}^{\infty}\Bigl(\frac{\sinh(sx)}{sx}-1\Bigr)dF_1(x) \ge s \int_{A}^{\infty}\Bigl(\frac{\sinh(sx)}{sx}-1\Bigr)dF_1(x) \ge \\ \ge s \, \Bigl(\frac{\sinh(sA)}{sA}-1\Bigr)(1-F_1(A)).
\end{aligned}
\end{align}
Inequalities (\ref{eq11}) and (\ref{eq10}) leads to (\ref{eq9}).
\end{proof}

It is interesting to note the following consequence of Theorem \ref{th2}.

\begin{co}\label{co2}
Suppose that characteristic function $f(t)$ is an entire of a finite exponential type. Then corresponding distribution function $F(x)$ is concentrated on a compact subset of real line.
\end{co}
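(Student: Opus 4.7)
The plan is to apply Theorem \ref{th2} with $R = \infty$ and then let $s \to \infty$, showing that the right-hand side of (\ref{eq9}) vanishes whenever $A$ exceeds the exponential type $\tau$ of $f$.

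First I would set up the growth bounds. Since $f$ is entire of exponential type $\tau$, for every $\varepsilon > 0$ there is a constant $C_{\varepsilon}$ with $|f(z)| \le C_{\varepsilon}\, e^{(\tau+\varepsilon)|z|}$ for all $z \in \mathbb{C}$. In particular, for $z = \pm i u$ with $u > 0$, this gives
\[
\left|\frac{f(iu)+f(-iu)}{2}-1\right| \le C_{\varepsilon}\, e^{(\tau+\varepsilon)u}+1,
\]
so the numerator of (\ref{eq9}) admits the bound
\[
\int_0^s\Bigl(\tfrac{f(iu)+f(-iu)}{2}-1\Bigr)\,du \;\le\; \frac{C_{\varepsilon}}{\tau+\varepsilon}\bigl(e^{(\tau+\varepsilon)s}-1\bigr)+s.
\]
(The integrand is in fact real and non-negative, equal to $\int(\cosh(xu)-1)\,dF(x)$, so no absolute values are needed.) For the denominator, $s\bigl(\sinh(As)/(As)-1\bigr) = \sinh(As)/A - s$ grows like $e^{As}/(2A)$ as $s \to \infty$.

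Next I would fix any $A > \tau$, choose $\varepsilon > 0$ small enough that $\tau+\varepsilon < A$, and let $s \to \infty$ in Theorem \ref{th2}. The ratio behaves asymptotically like a constant multiple of $e^{(\tau+\varepsilon-A)s} \to 0$, so $1 - F(A) + F(-A) = 0$. Hence $F(-A) = 0$ and $F(A) = 1$ for every $A > \tau$, which means $F$ is supported on the compact interval $[-\tau,\tau]$.

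There is no real obstacle here; the only delicate point is to make sure the exponential type bound is used in the form $|f(z)| \le C_\varepsilon e^{(\tau+\varepsilon)|z|}$ (valid globally, not merely at infinity) so that the integral over $[0,s]$ can be estimated for all $s$, and that the comparison of growth rates $e^{(\tau+\varepsilon)s}$ versus $\sinh(As)$ is made rigorous.
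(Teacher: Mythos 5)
Your proposal is correct and follows essentially the same route as the paper: apply Theorem \ref{th2} with $R=\infty$, bound the numerator of (\ref{eq9}) by the exponential-type estimate $|f(\pm iu)|\le C_\varepsilon e^{(\tau+\varepsilon)u}$ (the paper writes $\rho_1>\rho$ where you write $\tau+\varepsilon$), and let $s\to\infty$ for fixed $A$ exceeding the type to force $1-F(A)+F(-A)=0$. Your version merely spells out the growth comparison with $\sinh(As)$ a bit more explicitly than the paper does.
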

\begin{proof}
Let us remind that entire function $f(t)$ has finite exponential type not greater that $\rho$ if 
\begin{equation}\label{eq12}
\lim \sup_{r \to \infty}\frac{\log M(r)}{r} \le \rho,
\end{equation}
where
\[ M(r) =\max_{|z|\le r}|f(z)|.\]
For any $\rho_1 >\rho$ there is a positive constant $C$ such that
\[ M(r)< C \exp(\rho_1 r)\]
and, for sufficiently large $s>0$ integral in the right hand side of (\ref{eq9}) is smaller than $C_2 s \exp(\rho_1 s)$, where $C_2>0$ is a positive constant. For $A>\rho_1$ right hand side of (\ref{eq9}) tends to zero as $s$ tends to infinity. This shows that the left hand side of (\ref{eq9}) is zero for any $A>\rho$, and the support of distribution function $F(x)$ is concentrated in interval $[-\rho,\rho]$.
\end{proof}
Of course, if $F(x)$ is concentrated on compact subset of real line then its characteristic function is entire of finite exponential type.

Let us note that Corollary \ref{co2} is a consequence of Paley-Wiener Theorem (see \cite{PW}).

\section{One-sided estimators for the tail}
\setcounter{equation}{0}

Here we give one-sided estimators for the tail of distributions having characteristic function analytic in a region containing interval of the form $ t \in (0,a\,i)$ or $t \in (-b\,i,0)$ ($a,b >0$) on imaginary axis. In this situation, characteristic function is analytic in the strip $0<{\tt Im}\,t < a$ or $-b< {\tt Im}\,t<0$ correspondingly, and may be continued as corresponding integral (see \cite{LO}).

\begin{thm}\label{th3}
Suppose that $F(x)$ is probability law, whose characteristic function $f(t)$ is analytic in the strip $-b< {\tt Im}\,t<0$ (correspondingly, $0< {\tt Im}\,t<b$) for a positive $b$. Then for any $A>0$ and arbitrary $s \in (0,b)$ the following inequality holds:
\begin{equation}\label{eq13}
1-F(A) \le \frac{A}{\exp(sA)-sA-1}\int_{0}^{s}(f(-iu)+F(+0)-1)du,
\end{equation}
(correspondingly
\begin{equation}\label{eq14}
F(-A) \le \frac{A}{\exp(sA)-sA-1}\int_{0}^{s}(f(iu)-F(-0))du\; ).
\end{equation}
\end{thm}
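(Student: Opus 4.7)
The plan is to adapt the proof of Theorem \ref{th2}, replacing the symmetric kernel $\cosh(xu)-1$ by the one-sided exponential kernel $e^{xu}-1$ and integrating $dF$ only over the positive half-line. Since $f$ is analytic in the strip $-b<{\tt Im}\,t<0$, the one-sided Laplace transform $f(-iu)=\int_{-\infty}^{\infty}e^{xu}\,dF(x)$ is well-defined and finite for $u\in(0,b)$, so $X$ has exponential moments of every order less than $b$ on the positive tail.

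The central object I would study is
\begin{equation*}
I:=\int_{0}^{s}\int_{(0,\infty)}(e^{xu}-1)\,dF(x)\,du.
\end{equation*}
Applying Fubini yields $I=\int_{(0,\infty)}g(x)\,dF(x)$ where $g(x):=(e^{sx}-1-sx)/x$. One checks that $g$ is non-negative and strictly increasing on $(0,\infty)$: the numerator of $g'(x)$ is $e^{sx}(sx-1)+1$, which vanishes at $x=0$ and has derivative $s^{2}xe^{sx}>0$, hence is positive for $x>0$. Dropping the contribution from $(0,A]$ and using monotonicity of $g$ then gives the lower bound
\begin{equation*}
I\ge g(A)\bigl(1-F(A)\bigr)=\frac{e^{sA}-1-sA}{A}\bigl(1-F(A)\bigr).
\end{equation*}

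For a matching upper bound on $I$, I would decompose, for each fixed $u\in(0,s)$,
\begin{equation*}
\int_{(0,\infty)}(e^{xu}-1)\,dF(x)=f(-iu)+F(+0)-1-\int_{(-\infty,0]}e^{xu}\,dF(x),
\end{equation*}
and observe that the final integral is non-negative (its integrand is non-negative), so the left-hand side is at most $f(-iu)+F(+0)-1$. Integrating over $u\in(0,s)$ and combining with the lower bound on $I$ proves (\ref{eq13}). The companion estimate (\ref{eq14}) follows by applying the identical argument to the reflected law $\tilde F(x)=1-F(-x)$, whose characteristic function is $f(-t)$; equivalently, one uses $f(iu)=\int e^{-xu}dF(x)$ and the kernel $e^{-xu}-1$ restricted to $x<0$.

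I do not anticipate any serious obstacle. The essential steps are the Fubini exchange, the elementary monotonicity check for $g$, and the sign observation that lets one discard the negative half-line in the upper bound. The only bookkeeping subtlety is the handling of a possible atom of $F$ at the origin, which is precisely what produces the $F(+0)$ correction in the final estimate.
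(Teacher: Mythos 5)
Your argument is correct and is essentially the paper's own proof: the paper simply says to repeat the proof of Theorem \ref{th2} with the kernel $e^{ux}-1$ (resp.\ $e^{-ux}-1$) in place of $\cosh(xu)-1$, and your Fubini exchange, the monotonicity of $g(x)=(e^{sx}-1-sx)/x$, and the discarding of the nonnegative integral over $(-\infty,0]$ (which produces the $F(+0)-1$ correction) are exactly the details that this replacement entails. The reflection argument for (\ref{eq14}) likewise matches the intended ``corresponding'' case, so no gap remains beyond the representation $f(-iu)=\int e^{ux}\,dF(x)$, which the paper itself takes from the cited Linnik--Ostrovskii result.
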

\begin{proof}
It is enough to repeat the proof of Theorem \ref{th2} using the function $e^{ux}-1$ (correspondingly, $e^{-ux}-1$) instead of $cosh(xu)-1$.
\end{proof}
Suppose that $b=\infty$. From (\ref{eq13}) (correspondingly, from (\ref{eq14})) it follows that if $f(-iu) \le \exp (au)$  (correspondingly,  $f(iu) \le \exp (au)$) for sufficiency large $u$, then corresponding tail is zero for large values of $A$. This is known result (see \cite{Po}).

Let us note that the estimators (\ref{eq13}) and (\ref{eq14}) do not work for small values of $s$, that is as $s \to 0$. This is an essential difference with the cases of (\ref{eq2}) and (\ref{eq9}). 
\section{Acknowledgment}
The work was partially supported by Grant GA\u{C}R 16-03708S.

\end{document}